\numberwithin{equation}{section}
\theoremstyle{plain}
\newtheorem{Theorem}{Theorem}[section]
\newtheorem{Algorithm}[Theorem]{Algorithm}
\newtheorem{Proposition}[Theorem]{Proposition}
\newtheorem{Lemma}[Theorem]{Lemma}
\theoremstyle{definition}
\newtheorem{Example}[Theorem]{Example}
\newtheorem{Definition}[Theorem]{Definition}
\newcounter{FNC}[page]
\def\fauxfootnote#1{{\addtocounter{FNC}{2}\Magenta{$^\fnsymbol{FNC}$}%
     \let\thefootnote\relax\footnotetext{\Magenta{$^\fnsymbol{FNC}$#1}}}}
\newcommand{\Z}{\mathbb Z}
\newcommand{\R}{\mathbb R}
\newcommand{\conv}{\operatorname{conv}}
\newcommand{\w}{\operatorname{w}}
\newcommand{\nls}{\operatorname{nls_\Delta}}
\newcommand{\ls}{\operatorname{ls_\Delta}}
\newcommand{\lstwo}{\operatorname{ls_{\Delta_2}}}
\newcommand{\lss}{\operatorname{ls_\square}}
\newcommand{\lan}{\langle}
\newcommand{\ran}{\rangle}
\title{Lattice Size of Width One Lattice Polytopes in $\mathbb{R}^3$}
\author{Abdulrahman Alajmi}
\address{Department of Mathematical Sciences\\
        Kent State University\\
        800 E. Summit st., Kent, OH 44242, USA}
\email{aalajmi@kent.edu}
\author{Jenya Soprunova}
\address{Department of Mathematical Sciences\\
        Kent State University\\
        800 E. Summit st., Kent, OH 44242, USA}
\email{esopruno@kent.edu}
\urladdr{http://www.math.kent.edu/~soprunova/}
\subjclass[2020]{11H06, 52B20, 52C07}
\keywords{Lattice size, lattice width, lattice polygons, generalized basis reduction.}
\begin{document}

\begin{abstract} The lattice size $\operatorname{ls_\Delta}(P)$ of a lattice polytope $P$ is a geometric invariant, which was formally introduced  in relation to the problem of bounding the total degree and the bi-degree of the defining equation of an algebraic curve, but appeared implicitly earlier in geometric combinatorics. In this paper, we show that for an empty lattice polytope $P\subset\mathbb{R}^3$ there exists a reduced basis  of $\mathbb{Z}^3$ which computes  its lattice size $\operatorname{ls_\Delta}(P)$. This leads to a fast algorithm for computing $\operatorname{ls_\Delta}(P)$ for such $P$. We also extend this result to another class of lattice width one polytopes $P\subset\mathbb{R}^3$. We then provide a counterexample demonstrating that  this result does not hold true for an arbitrary lattice polytope $P\subset\mathbb{R}^3$ of lattice width one.
\end{abstract}

\maketitle

%

\section{Introduction}
In this paper, we apply basis reduction to the problem of computing the lattice size of lattice polytopes $P\subset\R^3$ whose lattice width is equal to one. In particular, we provide a basis-reduction algorithm for computing the lattice size of empty lattice polytopes $P\subset\R^3$. 

A reduced basis is a basis of the integer lattice that consists of vectors that are short with respect to a given norm. The problem of finding such a basis is a complex and important problem that has been extensively studied, particularly owing to its wide-ranging applications in cryptology (as illustrated in~\cite{Barvinok} and Chapter IV of~\cite{Galbraith}).  The LLL algorithm~\cite{LLL}, introduced by Lenstra, Lenstra, and Lovász in 1982, marked a significant breakthrough in the lattice basis reduction theory in the case of the inner-product norm. Subsequently, various other algorithms have been developed, primarily focusing on addressing the case of the inner-product norm.

In~\cite{LovScarf}, Lovász and Scarf generalized the LLL reduction algorithm to the case of the general norm.  In the case of dimension 2, this algorithm was
analyzed by Kaib and Schnorr in~\cite{KaibSchnorr}. In~\cite{HarSopr} this analysis was extended to dimension 3, providing a fast algorithm for basis reduction with respect to the general norm. This algorithm 
was then applied in~\cite{HarSopr} to computing the lattice size, as we elaborate further.

The lattice size was formally introduced in~\cite{CasCools} in the context  of simplification of an equation defining an algebraic curve. It appeared implicitly earlier
 in~\cite{Arnold, BarPach, BrownKasp, LagZieg, Schicho}, and was further studied in~\cite{Abdul-thesis,ACKS, HarSopr, HarSoprTier, Sopr}. We next reproduce its definition.

Denote by $\Delta=\conv\{0,e_1,\dots, e_d\}\subset\R^d$ the standard simplex, where $0\in\R^d$ is the origin and $(e_1,\dots, e_d)$ is the standard basis of $\R^d$.
See the next section for the definition of  the group of affine unimodular maps $T\colon\R^d\to \R^d$, denoted by  ${\rm AGL}(d,\Z)$.
\vspace{.1cm}
\begin{Definition}~\label{D:lattice-size} The {\it lattice size} $\ls(P)$ of a lattice polytope  $P\subset\R^d$ with respect to the standard simplex $\Delta$ is the smallest $l\geq 0$ such that $T(P)$ is contained in the $l$-dilate  $l\Delta$  for some affine unimodular map $T$.
Equivalently, if we set
$$l_1(P)=\max\limits_{(x_1,\dots,x_d)\in P} (x_1+\cdots+x_d)-\min\limits_{(x_1,\dots,x_d)\in P} x_1-\cdots-\min\limits_{(x_1,\dots,x_d)\in P} x_d,$$
then $\ls(P)=\min\{l_1(T(P))\mid T\in{\rm AGL}(d,\Z)\}$.
\end{Definition}

Replacing in Definition~\ref{D:lattice-size} the standard simplex $\Delta$ with the unit cube $\square=[0,1]^d$, we  obtain the definition of  the lattice size with respect to the unit cube, denoted by $\lss(P)$. 
See Example~\ref{E:lattice-size} for an illustration of both definitions.

In~\cite{CasCools} the authors provided an algorithm for computing both  lattice sizes, $\ls(P)$ and $\lss(P)$, of a plane lattice polygon $P\subset\R^2$. This algorithm is based on a procedure for mapping a polygon inside a small multiple of the standard simplex, introduced by Schicho in~\cite{Schicho}. In this algorithm, called the ``onion skins'' algorithm, one passes recursively from a lattice polygon $P$ to the convex hull of the interior lattice points of $P$. This algorithm applies only to lattice polygons in the plane, and its extension to higher dimensions is not feasible. 

It was subsequently shown in~\cite{HarSopr, HarSoprTier} that basis reduction provides a faster way of computing both lattice sizes, $\ls(P)$ and $\lss(P)$ for $P\subset\R^2$. This approach applies not only to lattice polygons, but also to arbitrary plane convex bodies.  Furthermore, it was shown in~\cite{HarSopr} that basis reduction computes $\lss(P)$ for  $P\subset\R^3$. However, as demonstrated by a counterexample in~\cite{HarSoprTier}, basis reduction does not necessarily compute $\ls(P)$ for  $P\subset\R^3$.

In this paper, we focus on the case of width one lattice polytopes $P\subset\R^3$ and, in particular,  of empty lattice polytopes $P\subset\R^3$.
Recall that a lattice polytope $P$ is {\it empty} if its  only lattice points are its vertices. It was shown in~\cite{Scarf} that any empty lattice polytope $P\subset\R^3$ has lattice width one. 
In our main result, Theorem~\ref{T:empty-polytopes},  we prove that for an empty lattice polytope $P\subset\R^3$ there exists a reduced basis 
that computes its lattice size $\ls(P)$. Algorithm~\ref{A:compute-ls}, based on this theorem, uses basis reduction to compute the lattice size $\ls(P)$ of an empty lattice polytope  $P\subset\R^3$.

%

In Section~\ref{S:another-class},  we consider another class of lattice polytopes $P\subset\R^3$ of lattice width one and show in Theorem~\ref{T:another-class}  that there exists a reduced basis that computes $\ls(P)$. 

Based on our results, it is natural to ask whether for any lattice polytope $P\subset\R^3$ of lattice width one there exists a reduced basis that computes $\ls(P)$.
In Section~\ref{S:experimentation}, we provide an example that demonstrates  that the answer to this question is negative.

\section{Definitions and First Lemmas}
A point in $\R^d$ is a {\it lattice point} if it belongs to $\Z^d\subset\R^d$. A {\it lattice polytope} $P\subset\R^d$ is a convex polytope all of whose vertices are lattice points. A segment connecting two lattice points is called a {\it lattice segment}.  A lattice segment is {\it primitive} if its only lattice points are its endpoints. A vector is {\it primitive} if it is a direction vector of a primitive segment.  

Recall that a {\it unimodular matrix} $A\in{\rm GL}(d,\Z)$ is a square matrix of size $d$ with integer entries that satisfies $\det A=\pm 1$. Further, ${\rm AGL}(d,\Z)$ is the group of affine unimodular transformations $T\colon\R^d\to\R^d$ of the form 
$T(x)=Ax+h$, where $A\in{\rm GL}(d,\Z)$ and $h\in\Z^d$. Note that the rows of an integer matrix $A$ of size $d$ form a basis of $\Z^d$ if and only if $A$ is unimodular.
Another standard fact is that a primitive vector $h\in\Z^d$ can be extended to a basis of $\Z^d$.

We say that two lattice polytopes in $P, Q\subset\R^d$ are {\it lattice equivalent} if $Q=T(P)$ for some $T\in{\rm AGL}(d,\Z)$.
We will write $AP$ for the image of $P$ under the map $T\colon \R^d\to\R^d$ defined by $T(x)=Ax$.

For a lattice polytope $P\subset\R^d$ 
 the {\it lattice width of  $P$ in the direction of} $h\in\R^d$ is
$$\w_h(P)=\max\limits_{x\in P} \lan h, x\ran- \min_{x\in P} \lan h, x\ran,
$$
where $ \lan h, x\ran$ denotes the standard inner product in $R^d$. Then the {\it lattice width} $\w(P)$  of $P$ is the minimum of $\w_h(P)$ over all non-zero primitive vectors $h\in \Z^d$.

We next illustrate the definition of the lattice size provided in the introduction.

  \begin{Example}\label{E:lattice-size} 
Let  $P=\conv\{(0,0), (1,0), (3,1), (6,3), (4,3)\}$, as depicted in Figure~\ref{F:ls_example}. 
Define
 $T(x,y)=\begin{bmatrix}1&-1\\-1&2\end{bmatrix}\cdot \begin{bmatrix}x\\ y\end{bmatrix}+\begin{bmatrix}0\\ 1\end{bmatrix}$. Then 
 $$T(P)=\conv\{(0,1), (1,0), (2,0), (3,1), (1,3)\}.$$  
 We observe that  $T(P)\subset 3\square$ and $T(P) \subset 4\Delta$. Furthermore, since $P$ contains three interior lattice points while $2\square$ and $3\Delta$ contain only one, it follows that it is impossible to unimodularly map $P$ within $2\square$ or $3\Delta$. Therefore, we conclude that $\lss(P)=3$ while $\ls(P) = 4$. 
 \end{Example}
\begin{figure}[h]
\begin{center}
\includegraphics[scale=.45]{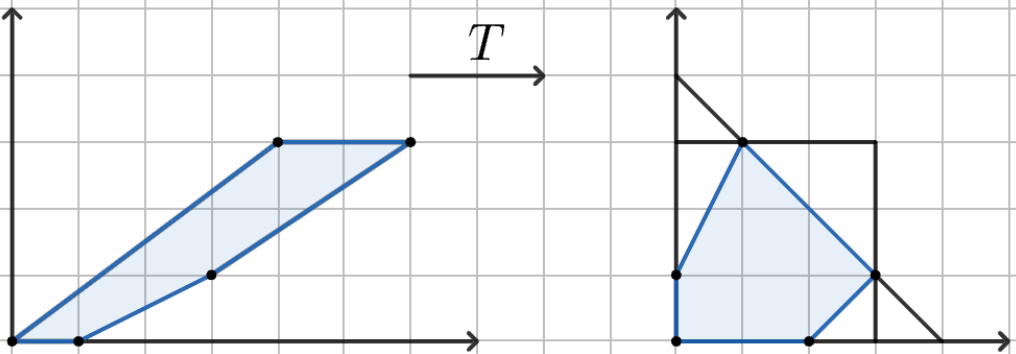} 
\caption{Example~\ref{E:lattice-size}.}
\label{F:ls_example} 
\end{center}
\end{figure}

\begin{Definition} For $P\subset\R^d$, {\it the naive lattice size}  $\nls(P)$ is the smallest $l$ such that $T(P)\subset l\Delta$, where $T\colon\R^d\to\R^d$ is a composition of matrix multiplication by a 
diagonal matrix $A$ with entries $\pm 1$ on the main diagonal, and a lattice translation. Further, for $d=2$, let
\begin{eqnarray*}
l_1(P)&:=&\max\limits_{(x,y)\in P}(x+y)-\min\limits_{(x,y)\in P}x-\min\limits_{(x,y)\in P}y,\\
l_2(P)&:=&\max\limits_{(x,y)\in P} x+\max\limits_{(x,y)\in P}y-\min\limits_{(x,y)\in P}(x+y),\\
l_3(P)&:=&\max\limits_{(x,y)\in P} y-\min\limits_{(x,y)\in P}x +\max\limits_{(x,y)\in P} (x-y),\\
l_4(P)&:=&\max\limits_{(x,y)\in P} x-\min\limits_{(x,y)\in P}y+\max\limits_{(x,y)\in P}(y-x).
\end{eqnarray*}
Then in the case $d=2$ the naive lattice size $\nls(P)$ is the smallest of these four values. For $P\subset\R^3$ one can similarly write $\nls(P)$ as the minimum of eight $l_i(P)$, each corresponding to a vertex of the unit cube.
\end{Definition}

\begin{Example}\label{E:def-nls}
Consider $P=\conv\{(0,0), (3,0), (3,2), (2,3), (0,1)\}$, as depicted in Figure~\ref{F:def-nls}.  Then $l_1(P)=5$, $l_2(P)=6$, $l_3(P)=6$, and $l_4(P)=4$, which implies that $\nls(P)=l_4(P)=4$.
 \begin{figure}[h]
\begin{center}
\includegraphics[scale=.33]{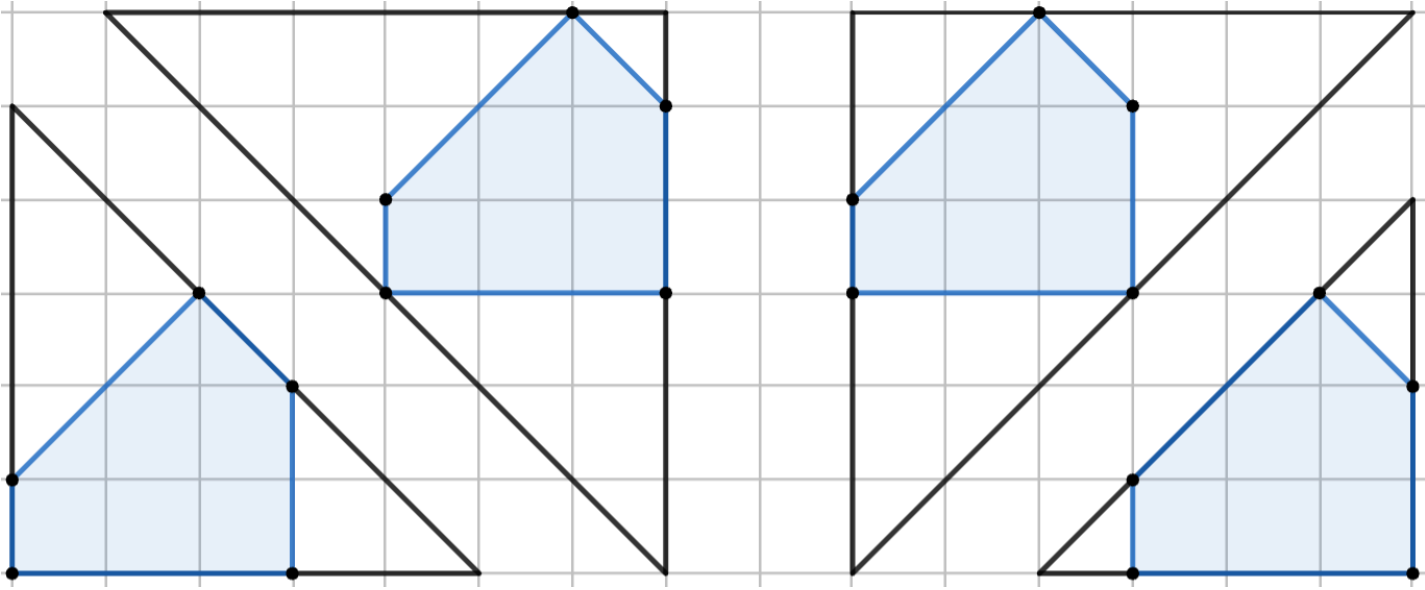} 
\caption{Example~\ref{E:def-nls}}
\label{F:def-nls} 
\end{center}
\end{figure}
\end{Example}

\begin{Definition}
We say that a basis $(h_1,\dots, h_d)$ of $\Z^d$ computes $\ls(P)$ if for matrix $A$ with rows $h_1,\dots, h_d$ we have $\ls(P)=\nls(AP)$.
\end{Definition}

Let $(-P)$ be the reflection of $P$ in the origin, and define  $K$ to be the polar dual of the Minkowski sum of $P$ with $(-P)$, that is, $K:=(P+(-P))^{\circ}$.
Then $K$ is origin-symmetric and convex and it defines a norm on $\R^d$ by
$$\Vert h\Vert _K=\inf\{\lambda>0\mid h/\lambda \in K\}.$$
For details see, for example, \cite{Barvinok}. We then have
$$\Vert h\Vert_K =\inf\{\lambda>0\mid \lan h, x\ran\leq\lambda {\rm\ for\ all\ }  x\in K^\circ\}=\max\limits_{x\in K^{\circ}} \lan h, x\ran =\frac{1}{2}\w_h(K^{\circ})=\w_h(P).
$$
In what follows we will often write  $\Vert h\Vert$ for the lattice width $\w_h(P)$.

\begin{Definition}\label{D:reduced} Let $P\subset\R^2$. We say that a basis $(h_1, h_2)$ of the integer lattice $\Z^2\subset\R^2$ is {\it reduced (with respect to $P$)} if 
for the norm defined above we have
\begin{enumerate}
\item $\Vert h_1 \Vert\leq \Vert h_2 \Vert$ and
\item $\Vert h_1\pm h_2 \Vert \geq  \Vert h_2\Vert$.
\end{enumerate}
\end{Definition}

Note that for the sake of simplicity in this definition we refer to such a basis as reduced with respect to $P$, while formally one needs to say that it is reduced with respect to the centrally-symmetric body $K=(P+(-P))^\circ$, as was done in~\cite{HarSopr}. We will maintain this slight abuse of notation throughout the paper.

It was shown in~\cite{HarSopr} that if the standard basis $(e_1,e_2)$ is reduced with respect to a convex body $P\subset\R^2$, then $\lss(P)=\w_{e_2}(P)$. This result was extended to the lattice size with respect 
to the standard simplex $\Delta\subset\R^2$ in~\cite{HarSoprTier}, where it was shown that  if the standard basis  $(e_1,e_2)$ is reduced with respect to $P$, then it computes $\ls(P)$, that is, $\ls(P)=\nls(P)$. Since one can find a reduced basis
using the generalized Gauss reduction algorithm~\cite{KaibSchnorr,LovScarf}, these results provide a fast way of finding $\ls(P)$ and $\lss(P)$ for plane convex bodies $P$. As explained in~\cite{HarSopr, HarSoprTier}, this method outperforms the ``onion skins" algorithm of~\cite{CasCools}.

\begin{Example}
For the polygon $P$ in Example~\ref{E:def-nls} we have $\w_{e_1}(P)=3$, $\w_{e_2}(P)=3$, $\w_{e_1+e_2}(P)=5$, and $\w_{e_1-e_2}(P)=4$, which implies that the standard basis is reduced with respect to $P$.
We conclude that $\lss(P)=\w_{e_2}(P)=3$ and $\ls(P)=\nls(P)=l_4(P)=4$.
\end{Example}

Since the condition $\Vert h_1\pm h_2 \Vert \geq  \Vert h_2\Vert$ is equivalent to requiring that  $\Vert mh_1+ h_2 \Vert \geq  \Vert h_2\Vert$ for $m\in\Z$ (see~\cite{HarSopr}), the following definition is a natural extension of 
the previous one.

\begin{Definition}\label{D:Reduced} Let $P\subset\R^3$. We say that a basis $(h_1, h_2, h_3)$ of $\Z^3$ is reduced (with respect to $P$) if the following three conditions are satisfied.
\begin{enumerate}
\item $\Vert h_1 \Vert \leq  \Vert h_2 \Vert \leq \Vert h_3 \Vert $;
\item $\Vert h_1\pm h_2\Vert \geq  \Vert h_2 \Vert$;
\item $\Vert mh_1+nh_2+h_3\Vert \geq \Vert h_3\Vert$ for all $m,n\in\Z$.
\end{enumerate}
\end{Definition}

An algorithm for finding a basis which is reduced with respect to  a convex body  $P\subset\R^3$ is provided in~\cite{HarSopr}.
It is also shown in~\cite{HarSopr} that if the standard basis $(e_1,e_2,e_3)$ is reduced with respect to $P\subset\R^3$, then $\lss(P)=\w_{e_3}(P)$. 
This result does not extend to the lattice size with  respect to the standard simplex. An example provided in~\cite{HarSoprTier} demonstrates that there exists a 
lattice polytope $P\subset\R^3$ such that there is no reduced basis that computes $\ls(P)$.

Let $P\subset\R^3$ be a lattice polytope. We record a few observations which are 3D versions of Lemmas 2.5 and 2.6 in~\cite{HarSoprTier} and Lemmas 2.1 and 2.2 in~\cite{ACKS}.

\begin{Lemma}\label{L:applyA}
Let $A\in{\rm GL}(3,\Z)$ have rows $h_1,h_2, h_3$. Then
\begin{enumerate}
\item For  $h\in\R^3$ we have  $\w_{h}(AP)=\w_{A^{T}h}(P)$.
\item For $i=1,2,3$ we have $\w_{e_i}(AP)=\w_{h_i}(P)$. 
\item We have $\w_{e_i}(P)\leq l_1(P)$.
\item Let $h$ be $h_1,h_2,h_3, h_1+h_2,h_1+h_3,h_2+h_3$, or $h_1+h_2+h_3$. Then $\w_h(P)\leq l_1(AP)$.
\item Let $h$ be as in {\rm (4)} and suppose that $l_1(AP)<l_1(P)$.  Then $\w_{h}(P)<l_1(P)$.
\end{enumerate}
\end{Lemma}

\begin{proof}
For (1) we have
$$\w_{h}(AP)=\max\limits_{x\in P} \lan h, Ax\ran - \min_{x\in P} \lan h, Ax\ran=\max\limits_{x\in P} \lan A^{T}h,x\ran- \min_{x\in P} \lan A^{T}h,x\ran=\w_{A^{T}h}(P),
$$
and (2) follows as then $\w_{e_i}(AP)=\w_{A^{T}e_i}(P)=\w_{h_i}(P)$.
To prove (3), denote $l_1=l_1(P)$. Then $P\subset l_1\Delta$ and hence 
$$\w_{e_i}(P)\leq \w_{e_i}(l_1\Delta)=l_1.$$
Next, we have $\w_{h_i}(P)=\w_{e_i}(AP)\leq l_1(AP)$.
The same argument works if we replace $h_i$ with the sum of any nonempty collection of rows of $A$, while replacing $e_i$ with the corresponding sum of the basis vectors, and (4) follows.
Finally, (5) is a trivial corollary of (4). 
\end{proof}

\begin{Lemma}\label{L:sum-of-rows} Let $h_1,h_2$, and $h_3$ be the rows of $A\in{\rm GL}(3,\Z)$. Then
\begin{itemize}
\item[(a)] $l_1(AP)=\max_{x\in P} \lan h_1+h_2+h_3, x\ran-\min_{x\in P} \lan h_1, x\ran-\min_{x\in P} \lan h_2, x\ran-\min_{x\in P} \lan h_3, x\ran$;
\item[(b)] $l_1(AP)$ does not depend on the order of the rows in $A$;
\item[(c)] $l_1(AP)=l_1\left(BP\right),$ where $B=\begin{bmatrix}h_1\\h_2\\-(h_1+h_2+h_3)\end{bmatrix}$.
\end{itemize}
\end{Lemma}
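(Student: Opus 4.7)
The statement is a direct bookkeeping lemma about the functional $l_1$, so my plan is essentially to unwind the definitions and exploit the linearity of the dot product. I would prove (a) first, then deduce (b) and (c) as formal consequences.

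For (a), the plan is to observe that if $h_1,h_2,h_3$ are the rows of $A$, then for any $x\in P$ the coordinates of $Ax$ are exactly $\langle h_i,x\rangle$, $i=1,2,3$. Substituting into the definition
$$l_1(AP)=\max_{y\in AP}(y_1+y_2+y_3)-\min_{y\in AP}y_1-\min_{y\in AP}y_2-\min_{y\in AP}y_3$$
and using linearity of the inner product to combine the three terms in the maximum into $\langle h_1+h_2+h_3,x\rangle$ gives (a) immediately.

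For (b), the right-hand side of (a) is visibly invariant under permutations of $h_1,h_2,h_3$: the sum $h_1+h_2+h_3$ does not change, and the three minimum terms are merely permuted. I would just remark on this symmetry (noting that any row permutation is unimodular, so the permuted matrix still lies in $\mathrm{GL}(3,\Z)$ and the statement makes sense). The only subtlety worth a sentence is the reminder that $\mathrm{GL}(d,\Z)$ allows determinant $\pm 1$, so no sign issue arises.

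For (c), let $B$ be the matrix with rows $h_1,h_2,-(h_1+h_2+h_3)$. I would first check that $B\in\mathrm{GL}(3,\Z)$ by writing $B=CA$ with
$$C=\begin{bmatrix}1&0&0\\0&1&0\\-1&-1&-1\end{bmatrix},\qquad \det C=-1,$$
so $B$ is unimodular and part (a) applies to $BP$. The sum of the rows of $B$ is $-h_3$, so the $\max$-term in (a) becomes $\max_{x\in P}\langle -h_3,x\rangle=-\min_{x\in P}\langle h_3,x\rangle$, while the third min-term becomes $-\min_{x\in P}\langle -(h_1+h_2+h_3),x\rangle=\max_{x\in P}\langle h_1+h_2+h_3,x\rangle$. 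After collecting, the four terms in the formula for $l_1(BP)$ are exactly the four terms for $l_1(AP)$ (just rearranged), giving (c). I expect no real obstacle here; the whole lemma is a routine unwinding, and the only thing to be careful about is keeping track of signs in part (c).
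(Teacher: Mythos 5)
Your proof is correct and follows the same route as the paper: the paper leaves (a) and (b) as immediate from the definition and verifies (c) by plugging into the formula from (a), exactly as you do. Your only addition is the explicit factorization $B=CA$ with $\det C=-1$ to confirm unimodularity, which the paper omits but which is a reasonable thing to record.
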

\begin{proof}
Part (a) follows directly form the definition of $l_1(P)$:
\begin{align*}
l_1(AP)&=\max_{x\in P} \lan e_1+e_2+e_3, Ax\ran-\min_{x\in P} \lan e_1, Ax\ran-\min_{x\in P} \lan e_2, Ax\ran-\min_{x\in P}\lan e_3, Ax\ran\\
&=\max_{x\in P} \lan A^T(e_1+e_2+e_3), x\ran-\min_{x\in P} \lan A^Te_1, x\ran-\min_{x\in P} \lan A^Te_2, x\ran-\min_{x\in P}\lan A^{T}e_3, x\ran\\
&=\max_{x\in P} \lan h_1+h_2+h_3, x\ran-\min_{x\in P} \lan h_1, x\ran-\min_{x\in P} \lan h_2, x\ran-\min_{x\in P} \lan h_3, x\ran.
\end{align*}
Part (b) is straightforward. We check (c) using (a):
\begin{align*}
l_1\left(BP\right)&=\max_{x\in P} \lan-h_3, x\ran-\min_{x\in P} \lan h_1, x\ran-\min_{x\in P} \lan h_2, x\ran-\min_{x\in P}\lan -h_1-h_2-h_3, x\ran\\
&=\max_{x\in P}\lan h_1+h_2+h_3, x\ran -\min_{x\in P} \lan h_1, x\ran-\min_{x\in P} \lan h_2, x\ran-\min_{x\in P} \lan h_3, x\ran\\
&=l_1(AP).\\
\end{align*}
\end{proof}

\vspace{.1cm}

\begin{Definition}~\label{D:Minkowski-reduced} Let $P\subset\R^d$ be a convex body and let $\Vert h\Vert=\w_h(P)$. We say that a basis $(h_1,\dots,h_d)$ of $\Z^d$ is
{\it Minkowski reduced} if each $h_i$ is the shortest lattice vector such that $(h_1, . . . , h_i)$ can be extended to a basis of $\Z^d$ for $i=1,\dots,d$.
\end{Definition}
The norms $\mu_i=\mu_i(P):=\Vert h_i\Vert$ of the vectors in a Minkowski reduced basis $(h_1,\dots,h_d)$ of $\Z^d$ only depend on $P$ and are a version of  the successive minima of $K=(P+(-P))^\circ$, see~\cite{HarSopr} for details.

Clearly, a Minkowski reduced basis is reduced. The next example demonstrates that a reduced basis does not need to be Minkowski reduced. 
\begin{Example}\label{E:reduced-not-Mink-reduced}
Let $P\subset\R^3$ be the convex hull of  the set
$$\{(-1,0,1), (1,-1,0),  (-1,1,0), (1,1,-1), (-1,-1,1)\}
$$ and $(e_1,e_2,e_3)$ be the standard basis of $\Z^3$.
Consider the norm  $\Vert h\Vert=\w_h(P)$. Then $\Vert e_1\Vert=\Vert e_2\Vert =\Vert e_3\Vert=2$, $\Vert e_1+e_2\Vert=4$,  and $\Vert e_1-e_2\Vert=4$.
For $m,n\in Z$ we have
\begin{align*}
\Vert me_1+ne_2+e_3\Vert&=\max\{1-m,m-n,n-m,m+n-1,1-m-n\}\\
&-\min\{1-m,m-n,n-m,m+n-1,1-m-n\}\\
&\geq (m+n-1)-(1-m-n)=2(m+n)-2,
\end{align*}
and we conclude that $\Vert me_1+ne_2+e_3\Vert\geq 2$ if $m+n\geq 2$. Similarly,
$$\Vert me_1+ne_2+e_3\Vert\geq (1-m-n)-(m+n-1)=2-2(m+n)\geq 2 
$$
whenever $m+n\leq 0$. If $m+n=1$, we have  
$$\Vert me_1+ne_2+e_3\Vert\geq (m-n)-(n-m)=2(m-n)=2(2m-1)\geq 2
$$
if  $m\geq 1$. Similarly, $\Vert me_1+ne_2+e_3\Vert\geq (n-m)-(m-n)=2(n-m)=2(1-2m)\geq 2$ for $m\leq 0$.
We have checked that the standard basis is reduced with respect to $P$. However,  
$\Vert e_1+e_2+2e_3\Vert=1$ and $(e_1+e_2+2e_3, e_2, e_3)$ is a basis of $\Z^3$. We conclude that the standard basis is not Minkowski reduced. 
\end{Example}

Although a reduced basis is not necessarily Minkowski reduced, one can pass from a reduced basis to a Minkowski reduced one.
It was shown  in Theorems 2.3 and  3.3 of~\cite{HarSopr}  that if $(h_1,h_2,h_3)$ is a reduced basis of $\Z^3$  then $\Vert ah_1+bh_2\Vert\geq \Vert h_2\Vert$ for all $a,b\in\Z$ with $b\neq 0$ and 
$$\Vert ah_1+bh_2+ch_3\Vert\geq \Vert h_3\Vert {\rm\ for\ all \  } (a,b,c)\in\Z^3 {\rm\ with\ }c\neq 0, 
$$
except, possibly,  for two vectors $\pm u$ of the smallest norm $\Vert u\Vert=\Vert -u\Vert=\w_u(P)$ in   
$$E=\{ah_1+bh_2+ch_3: |a|=|b|=1, |c|=2\}.$$ 
Furthermore, two vectors of the smallest norm among $\{h_1,h_2,u\}$, 
written in the order of increasing norm,  together with $h_3$, form a Minkowski reduced basis and $\mu_3(P)=\Vert h_3\Vert$.

This provides us with an equivalent definition of a Minkowski reduced basis of $\Z^3$: A basis $(h_1,h_2,h_3)$ of $\Z^3$ is {\it Minkowski reduced} if $h_1\in\Z^3$ is a vector of smallest norm, $h_2\in \Z^3$ is the vector of smallest norm among vectors that are not multiplies of $h_1$, and $h_3\in\Z^3$ is a vector of smallest norm  such that $(h_1,h_2,h_3)$ is a basis of $\Z^3$.


 \begin{Lemma}\label{L:ls-geq-n} Let $(h_1,h_2,h_3)$ be a basis of $\Z^3$ which is Minkowski reduced with respect to $P$. Then $\ls(P)\geq \Vert h_3\Vert$.
  \end{Lemma}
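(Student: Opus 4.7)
The plan is to show that for any unimodular matrix $A$, the quantity $l_1(AP)$ is bounded below by $\|h_3\|$. Since $\ls(P) = \min_{T \in {\rm AGL}(3,\Z)} l_1(T(P))$ and translations do not affect lattice widths (so we may reduce to the linear case), this will give the desired inequality.

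First, let $A \in {\rm GL}(3,\Z)$ be arbitrary with rows $g_1, g_2, g_3$. The displayed computation directly above Lemma~\ref{L:sum-of-rows} in the excerpt gives
\[
l_1(AP) \geq \w_{g_i}(P) = \|g_i\| \quad \text{for } i=1,2,3,
\]
so in particular $l_1(AP) \geq \max_i \|g_i\|$. The goal is therefore to find an index $i \in \{1,2,3\}$ for which $\|g_i\| \geq \|h_3\|$, using the fact that $(h_1,h_2,h_3)$ is Minkowski reduced.

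Since $g_1, g_2, g_3$ form a basis of the rank-three lattice $\Z^3$, they cannot all lie in the two-dimensional real subspace $V = {\rm span}_\R(h_1, h_2)$; otherwise $\{g_1, g_2, g_3\}$ would span a space of dimension at most two, contradicting that they generate $\Z^3$. Therefore at least one $g_i$ is not an integer linear combination of $h_1$ and $h_2$ (indeed, not even in their real span). By condition (3) of Definition~\ref{D:Minkowski-reduced} applied to that $g_i$, we get $\|g_i\| \geq \|h_3\|$. Combining with the previous displayed inequality yields $l_1(AP) \geq \|h_3\|$.

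Taking the minimum over all $A \in {\rm GL}(3,\Z)$ (and noting that lattice translations leave each $\w_h$ unchanged, so that they cannot help in reducing $l_1$ below this bound either, as can be seen from the formula in Lemma~\ref{L:sum-of-rows}(a) where shifts cancel in each difference $\max - \min$), we conclude $\ls(P) \geq \|h_3\|$. The argument has no real obstacle; the only subtlety worth flagging is checking that an affine translation component of $T \in {\rm AGL}(3,\Z)$ cannot decrease $l_1(T(P))$ below the linear-case bound, which is immediate from the translation-invariance of the width function $\w_h$.
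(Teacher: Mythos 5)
Your proof is correct and follows essentially the same route as the paper: you bound $l_1(AP)$ below by the $\w$-norm of each row of $A$, then use the Minkowski-reduced property to argue that some row must have norm at least $\Vert h_3\Vert$. You fill in the short justification (the three rows cannot all lie in the real span of $h_1,h_2$) that the paper leaves implicit, but the underlying argument is the same.
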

\begin{proof}
Let $A\in{\rm GL}(3,\Z)$ be such that $\ls(P)=l_1(AP)$. The rows $r_1,r_2,r_3$ of $A$ form a basis of $\Z^3$ and hence  for one of them, say, $r_3$, we have $\Vert r_3\Vert\geq \Vert h_3\Vert$. Using parts (2) and (3) of Lemma~\ref{L:applyA} we conclude
$$\ls(P)=l_1(AP)\geq \w_{e_3}(AP)=\w_{r_3}(P)=\Vert r_3\Vert\geq\Vert h_3\Vert.
$$
\end{proof}

\begin{Lemma}\label{L:applyA-reduced} Suppose that $(h_1,h_2,h_3)$ is a basis of $\Z^3$ which is (Minkowski) reduced with respect to $P$ and let $A$ be a matrix with rows $h_1, h_2,$ and $h_3$.
Then the standard basis is (Minkowski) reduced with respect to $AP$.
  \end{Lemma}
\begin{proof} This follows directly from the definition of a (Minkowski) reduced basis and part (2) of Lemma~\ref{L:applyA} where we showed that 
$\w_{e_i}(AP)=\w_{h_i}(P)$ for $i=1,2,3$. 
\end{proof}

\section{Empty lattice triangles and parallelograms}
Recall that a lattice polytope $P$ is {\it empty} if its  only lattice points are its vertices.  It is an easy exercise to show that up to lattice equivalence the only empty lattice polygons $P\subset\R^2$ are the unit square $[0,1]^2$ and the standard 2-simplex. This, in particular, implies that a lattice parallelogram is empty if and only if its area equals 1. It was shown in~\cite{White} that up to lattice equivalence empty lattice tetrahedra in $\R^3$ are of the form
\begin{equation}
T_{pq}=\begin{bmatrix}1&0&0&p\\0&1&0&q\\0&0&1&1\end{bmatrix}, \label{e:empty-tetra}
\end{equation}
where $p$ and $q$ are non-negative integers satisfying $\gcd(p,q)=1$. Note that the columns of this matrix are the vertices of $T_{pq}$. (We will be using this notation throughout the paper: The polytope written as a matrix is the convex hull of its column vectors.) 

It was shown in~\cite{ACKS}  that if  $q\geq p^2-p$ and $q\geq 2$ then $\ls(T_{pq})=\lfloor \frac{q-2}{p+1}\rfloor+3$, and this computation was also extended~\cite{ACKS} to the case of dimension $n$.  Additional cases were computed in~\cite{Abdul-thesis}. 

Let $P\subset\R^3$ be an empty lattice polytope. It was shown in~\cite{Scarf} that any empty lattice polytope $P\subset\R^3$ has lattice width one and hence, using lattice equivalence, we can assume that $\w_{e_1}(P)=1$.  Denote $\Pi_0=P\cap\{x=0\}$ and $\Pi_1=P\cap\{x=1\}$. Since $P$ is empty,  each of $\Pi_0$ and $\Pi_1$ is  lattice equivalent to a point, a primitive segment, or an empty lattice polygon, which is either the standard simplex or the unit square $[0,1]^2$.  In other words, each of the two bases is either a lattice parallelogram of area 1 or is properly contained in such a parallelogram.

Note that the unimodular map
\begin{equation}\label{e:shift-layer}
A\begin{bmatrix}x\\y\\z\end{bmatrix}=\begin{bmatrix}1&0&0\\a&1&0\\b&0&1\end{bmatrix}\begin{bmatrix}x\\y\\z\end{bmatrix}
\end{equation}
fixes each point in the plane $x=0$ and shifts the plane $x=1$ by the vector $(0,a,b)$. Hence, composing such maps with lattice translations, one can use affine unimodular maps to independently shift $\Pi_0$ and $\Pi_1$ in the planes $x=0$ and $x=1$ correspondingly. Note that if, say,
$\Pi_0$ is a vertex, we can apply a unimodular map that maps $\Pi_1$ inside the unit square and then shift $\Pi_0$ so that $P\subset[0,1]^3$.

We next prove a few observations that  will allow us in Theorem~\ref{T:empty-polytopes} to treat empty lattice triangles and parallelograms as  lattice segments connecting two of  their vertices, which we will refer to as the main vertices.  

\begin{Proposition}\label{P:parallelograms} Let  $\Pi\subset\R^2$ be an empty parallelogram, an empty lattice triangle, or a primitive segment. Then one can shift $\Pi$ so that one of its  vertices is at the origin and the entire $\Pi$ is contained in the first or in the second quadrant.
\end{Proposition}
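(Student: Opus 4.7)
The plan is to reduce the problem to a sign analysis of the edge vectors. First, I would pick any vertex $w_0$ of $\Pi$ and let $v_1, v_2 \in \Z^2$ be the two edge vectors of $\Pi$ emanating from $w_0$. Since the area of $\Pi$ equals $|\det(v_1, v_2)| = 1$, the pair $(v_1, v_2)$ is a $\Z$-basis of $\Z^2$ and in particular both vectors are primitive. The four vertices of $\Pi$ are $w_0,\, w_0 + v_1,\, w_0 + v_2,\, w_0 + v_1 + v_2$, and shifting any one of them to the origin yields a parallelogram that agrees with the one spanned by $\eta_1 v_1,\, \eta_2 v_2$ for some choice of signs $\eta_1, \eta_2 \in \{\pm 1\}$. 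So the task reduces to choosing signs so that both $\eta_1 v_1$ and $\eta_2 v_2$ lie in the closed first quadrant, or both lie in the closed second quadrant.

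Writing $v_1 = (a,b)$ and $v_2 = (c,d)$, observe that $v_1$ can be negated to lie in the closed first quadrant exactly when $ab \geq 0$, and negated into the closed second quadrant exactly when $ab \leq 0$; the analogous statement holds for $v_2$. Therefore the proposition would follow from the claim that the signs of $ab$ and $cd$ are never strictly opposite, i.e.\ we cannot have $ab > 0$ while $cd < 0$ (nor the symmetric version).

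The key step, and the only nontrivial one, is to rule out this case using $|\det(v_1, v_2)| = 1$. After possibly replacing $v_1$ or $v_2$ by its negative (which does not change the value of $ab$ or $cd$), assume $a, b > 0$ and $c > 0,\, d < 0$. Then $ad < 0$ and $bc > 0$, so
\[
|ad - bc| \;=\; |ad| + bc \;\geq\; 1 + 1 \;=\; 2,
\]
contradicting $|\det(v_1, v_2)| = 1$. Consequently either $ab \geq 0$ and $cd \geq 0$, in which case suitable sign choices place $\Pi$ in the closed first quadrant, or $ab \leq 0$ and $cd \leq 0$, in which case suitable sign choices place $\Pi$ in the closed second quadrant. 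The main obstacle is precisely this determinantal contradiction; once the problem has been rephrased in terms of sign patterns of coordinates, the rest is immediate.
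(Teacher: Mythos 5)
Your proof is correct, and it takes a genuinely different route from the paper's. The paper picks a distinguished vertex to shift to the origin (the lowest, with an extra shift in case of ties), so that $b,d\geq 0$, derives the inequality $1 = ad-bc \geq b+d$ when the parallelogram straddles the $y$-axis, and then performs a secondary horizontal shift in the two resulting cases. You instead observe that the four possible ``shift a vertex to the origin'' normalizations correspond exactly to the four sign choices $(\eta_1 v_1, \eta_2 v_2)$ of the edge vectors, which reduces everything to a sign analysis of $ab$ and $cd$; the key point is that the ``bad'' pattern $ab>0$, $cd<0$ forces $|ad-bc| = |ad|+bc \geq 2$, contradicting area one. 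Both proofs hinge on the same determinantal constraint, but your version handles all cases uniformly (including the boundary cases $ab=0$ or $cd=0$, which are absorbed into the weak inequalities) and avoids the paper's additional shifting step, so it is somewhat cleaner structurally; the paper's version is more explicitly computational and directly produces the required translation. One thing worth making explicit for the reader is that ``contained in the first (resp.\ second) quadrant'' here means the closed quadrant, which your argument correctly uses.
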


\begin{proof} The statement is clearly true when $\Pi$ is a lattice segment, so assume $\Pi$ is either an empty parallelogram or an empty lattice triangle.
Shift $\Pi$ so that its lowest vertex is at the origin. If there are two vertices at the lowest level, shift $\Pi$ so that one of them is at the origin. Let $(a,b)$ and $(c,d)$ be the two vertices adjacent to the vertex at the origin.
If $\Pi$ is not entirely in the first or in the second quadrant, then we may assume $a>0, b\geq 0$, while $c<0$ and $d\geq 0$. Since $\Pi$ is of area 1 we have $1=ad-bc\geq d+b\geq 0$, which implies that either $b=0, d=1$ or $b=1, d=0$. 
(Note that we cannot have $b=d=0$.)
In the first of these cases, we have  $a=1$ and we can shift $\Pi$ left by 1 so that one of its vertices is at the origin and $\Pi$ is contained in the second quadrant, see Figure~\ref{F:parallelograms}.
In the second case,  we have $c=-1$ and we shift $\Pi$ right by 1 unit. 
\end{proof}
\begin{figure}[h]
\begin{center}
\includegraphics[scale=.5]{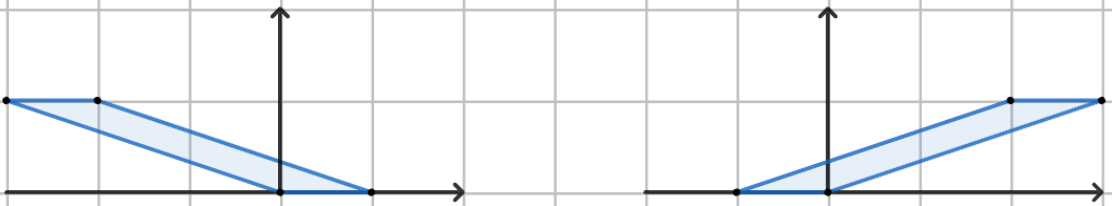} 
\caption{The two cases in Proposition~\ref{P:parallelograms}.}
\label{F:parallelograms} 
\end{center}
\end{figure}

\begin{Definition}
Let $\Pi$ be an empty lattice parallelogram, an empty lattice triangle, or a primitive segment.
If we can shift $\Pi$ so that one of its vertices is at the origin and $\Pi$ is contained in the first quadrant,  we say that $\Pi$ is {\it positively oriented}; if we can shift $\Pi$ so that one of its vertices is at the origin and  $\Pi$ is contained in the second quadrant, we say that $\Pi$ is {\it negatively oriented}. 
\end{Definition}

Note that it is possible for $\Pi$ to be both positively and negatively oriented, for example, this is the case if $\Pi$  is a translation of the unit square $[0,1]^2$. If $\Pi$ is a primitive segment, its orientation coincides with the sign of its slope, with horizontal and vertical segments being both positively and negatively oriented.

\begin{Definition}
Let $\Pi$ be either an empty lattice triangle or an empty lattice parallelogram. We say that $\Pi$ {\it possesses main vertices} if one can pick two vertices in $\Pi$ such that
the maximum and minimum of $x,y$, $x+y$, $x-y$ over $\Pi$ are attained on these two vertices, which we then call the {\it main vertices} of $\Pi$. 
In the case when $\Pi$ is a lattice segment, we say that $\Pi$ {\it possesses main vertices} and we call both of its vertices  the {\it main vertices}.
\end{Definition}

\begin{Proposition}\label{P:columns_ordered} 
Let $a,b,c,d$ be non-negative integers and let $\det\begin{bmatrix}a&b\\c&d\end{bmatrix}=\pm 1$. Then 
\begin{itemize}
\item[(i)] $a\geq b, c\geq d$ or
\item[(ii)] $a\leq b, c\leq d$ or
\item[(iii)] $\begin{bmatrix}a&b\\c&d\end{bmatrix}=\begin{bmatrix}1&0\\0&1\end{bmatrix}$.
\end{itemize}
\end{Proposition}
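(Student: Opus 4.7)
The plan is to assume that neither (i) nor (ii) holds and show this forces (iii). Negating (i) gives $a<b$ or $c<d$, and negating (ii) gives $a>b$ or $c>d$. Since $a<b$ and $a>b$ cannot coexist (and similarly for $c,d$), the only remaining possibilities are the two ``mixed'' cases: Case~1 with $a<b$ and $c>d$, or Case~2 with $a>b$ and $c<d$.

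First I would dispose of Case~1 by a short product estimate. Under $a<b$ and $c>d$ with all entries in $\Z_{\geq 0}$ we have $b\geq a+1$ and $c\geq d+1$, hence
\[
bc \;\geq\; (a+1)(d+1)\;=\;ad+a+d+1\;\geq\;ad+1,
\]
so $ad-bc\leq -1$, contradicting $ad-bc=1$. This case is ruled out completely.

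Next I would handle Case~2, where $a\geq b+1$ and $d\geq c+1$ give the symmetric estimate
\[
ad\;\geq\;(b+1)(c+1)\;=\;bc+b+c+1,
\]
so $ad-bc\geq b+c+1$. Setting this equal to $1$ forces $b+c=0$, hence $b=c=0$, and then $ad=1$ with $a,d\in\Z_{\geq 0}$ gives $a=d=1$, which is exactly the identity matrix, i.e.\ conclusion (iii).

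There is no serious obstacle here: the only subtlety is making sure the two ``mixed'' combinations genuinely cover all the ways (i) and (ii) can both fail, and being careful that the non-negativity hypothesis is what prevents Case~2 from producing non-identity matrices. Both points are handled by the elementary inequalities above, so the proof will be essentially a half-page case analysis.
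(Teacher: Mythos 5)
Your proof is correct and takes essentially the same route as the paper: an elementary case split followed by integer inequality estimates on $ad-bc$. The paper dives directly into the case $a>b$, $c<d$, uses the one-line estimate $1=ad-bc \geq bd-b(d-1)=b$ to force $b=0$ (then $a=d=1$, $c=0$), and dismisses the symmetric case $a<b$, $c>d$ with ``handled similarly''; you instead show that $a<b$, $c>d$ leads outright to a contradiction, while $a>b$, $c<d$ forces the identity via $ad\geq(b+1)(c+1)$. Your version is a touch more symmetric and explicit about why only the two mixed cases need attention, but both arguments rest on the same idea, so this is the same proof in slightly different clothing.
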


\begin{proof}
Suppose that $a> b$ but $c<d$. Then 
$\pm 1=ad-bc> bd-b(d-1)=b,$
so $b=0$ and  $a=d=1$. Also, $c\leq d-1=0$ implies $c=0$, so  $\begin{bmatrix}a&b\\c&d\end{bmatrix}=\begin{bmatrix}1&0\\0&1\end{bmatrix}$.
The case $a< b$, $c>d$ is handled similarly.
\end{proof}

The next proposition is illustrated in Figure~\ref{F:main-vertices}.

\begin{Proposition}\label{P:main-vertices-parallelogram} Let $\Pi\subset\R^2$ be an empty lattice parallelogram which is not contained in a lattice translation of the unit square.
Then $\Pi$ possesses main vertices.
\end{Proposition}

\begin{proof}
By reflecting $\Pi$ in the $y$-axis and translating it, if necessary, we can assume that $\Pi$ is positively oriented and has vertices $(0,0), (a,b), (c,d), (a+c,b+d)$, where $a,b,c,d\geq 0$ and $\Pi$ is not the unit square. Clearly, the minimum and the maximum  of $x+y$, $x$, and $y$ are attained at $(0,0)$ and $(a+c,b+d)$.  By Proposition~\ref{P:columns_ordered}, reflecting $\Pi$ in the line $x=y$ if necessary,  we can assume that $a\geq b, c\geq d$.  Hence 
$$0\leq a-b\leq (a-b)+(c-d)=(a+c)-(b+d)$$ 
and $0\leq c-d\leq (a+c)-(b+d)$, so the minimum of $x-y$ is attained at $(0,0)$, while its maximum is attained at $(a+c,b+d)$.
We have checked that $(0,0)$ and $(a+c,b+d)$ are the main vertices of $\Pi$.
\end{proof}

\begin{figure}
\begin{center}
\includegraphics[scale=.3]{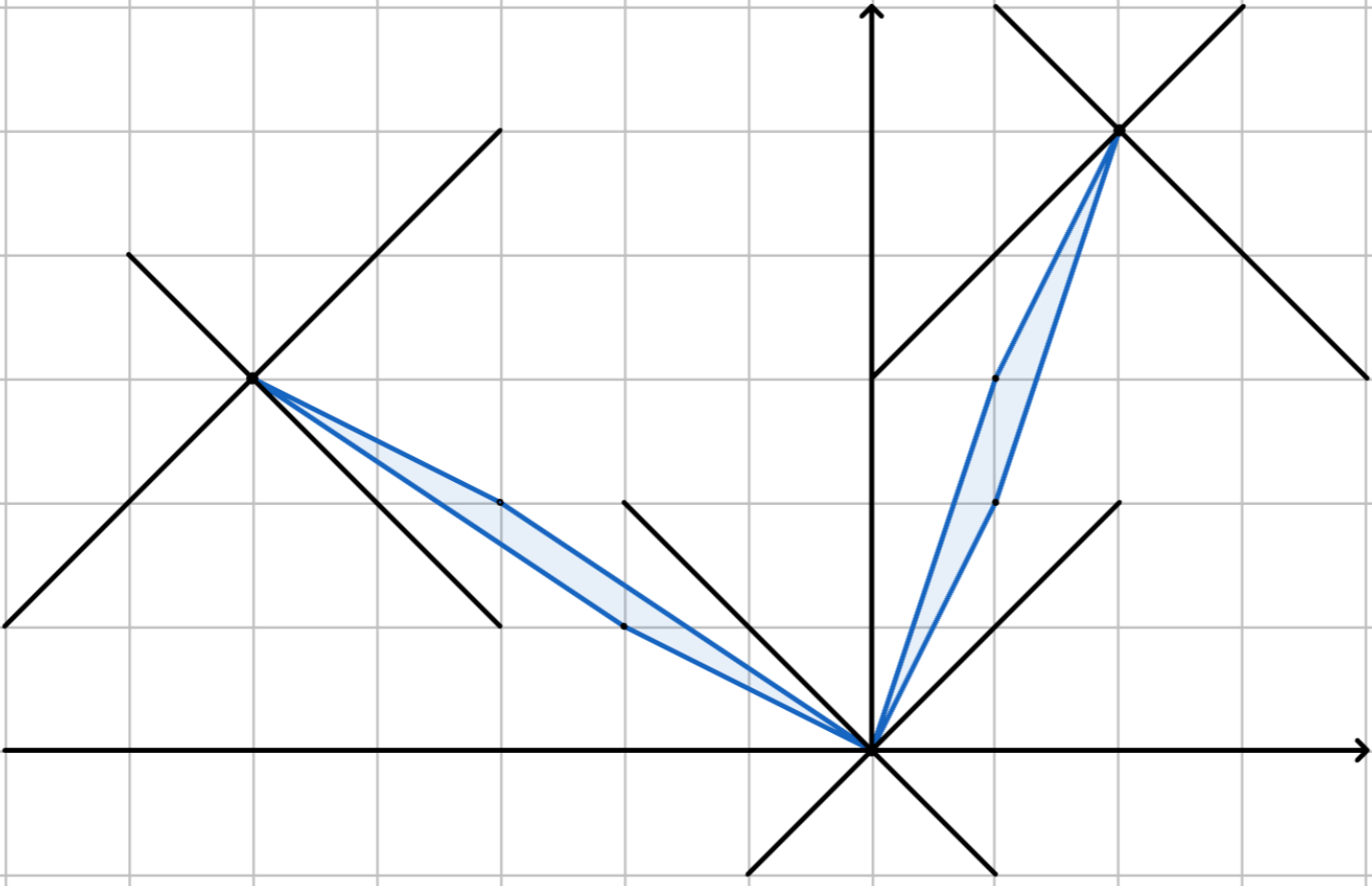}
\caption{Main vertices of parallelograms of area 1.}\label{F:main-vertices}
\end{center}
\end{figure}

\begin{Proposition}\label{P:main-vertices-triangle} Let $\Pi\subset\R^2$ be an empty lattice traingle which is not contained in a lattice translation of the unit square.
Then $\Pi$ possesses main vertices.
\end{Proposition}
\begin{proof} By reflecting $\Pi$ in the $y$-axis and translating it, if necessary, we can assume that $\Pi$ is positively oriented and has vertices $(0,0), (a,b), (e,f)$, where $a,b,e,f\geq 0$ and $\Pi$ is not contained in the unit square.  By applying Proposition~\ref{P:columns_ordered} to $\begin{bmatrix}a&e\\b&f\end{bmatrix}$ and, if necessary, swapping points $(a,b)$ and $(e,f)$, we can assume that
$a\leq e$ and $b\leq f$. Defining $c:=e-a\geq 0$ and $d:=f-b\geq 0$ we get $\Pi=\conv\{(0,0), (a,b), (a+c, b+d) \}$. The rest of the proof proceeds as in Proposition~\ref{P:main-vertices-parallelogram}, where the missing point $(c,d)$ does not make a difference. 
\end{proof}

\section{Lattice Size of Empty Lattice Polytopes}\label{S:main}

In this section we prove our main result that leads to a basis reduction algorithm for computing the lattice size $\ls(P)$ of an empty lattice polytope $P\subset\R^3$.

 \begin{Theorem}\label{T:empty-polytopes} Let $P\subset\R^3$ be an empty lattice polytope. Then there exists a  basis which is Minkowski reduced with respect to $P$ and computes the lattice size  $\ls(P)$.
 \end{Theorem}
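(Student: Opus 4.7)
\emph{Proof plan.} The plan is to normalize $P$ via lattice equivalence and then, within a Minkowski reduced basis, to locate a vertex of $P$ at which several relevant linear functionals simultaneously attain their extremes. By~\cite{Scarf}, $\w(P)=1$, so after an element of ${\rm AGL}(3,\Z)$ one may assume $\w_{e_1}(P)=1$ and $P\subseteq\{0\le x\le 1\}$, giving $P=\conv(\Pi_0\cup\Pi_1)$ where $\Pi_i:=P\cap\{x=i\}$ is an empty lattice polygon, i.e., a point, a primitive segment, an empty lattice triangle, or a lattice parallelogram of area one. Using the independent translations of $\Pi_0$ and $\Pi_1$ in their planes described in Section~2, I would further place each $\Pi_i$ in a normal form provided by \rp{max-min-positive} and \rp{max-min-negative}, so that each $\Pi_i$ has a prescribed orientation (positive or negative) and a main vertex at the origin of its plane.

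Next, fix a Minkowski reduced basis $(h_1,h_2,h_3)$ with respect to $P$. Since $\Vert h_1\Vert=\w(P)=1$, one more lattice equivalence lets us take $h_1=e_1$, and a unimodular change of variables in the $yz$-plane simplifies the triple further, for instance so that $h_2=(\ast,1,0)$ and $h_3=(r,s,\pm1)$ for some $r,s\in\Z$. By \rl{ls-geq-n}, $\ls(P)\ge\Vert h_3\Vert$, while $l_1(AP)\ge\w_{h_3}(P)=\Vert h_3\Vert$ always, so it will suffice to prove $l_1(AP)\le\Vert h_3\Vert$ for the matrix $A$ with rows $h_1,h_2,h_3$. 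By \rl{sum-of-rows}(a), this amounts to exhibiting a vertex $v^*\in P$ that simultaneously minimizes $\langle h_1,\cdot\rangle$ and $\langle h_2,\cdot\rangle$, maximizes $\langle h_3,\cdot\rangle$, and at which $\langle h_1+h_2+h_3,\cdot\rangle$ attains its maximum over $P$.

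The natural candidates for $v^*$ are the main vertices of $\Pi_0$ and $\Pi_1$: at those vertices every extremum of $y$, $z$, $y+z$, and $y-z$ on the slice is attained, and once $h_1=e_1$ these are exactly the functionals appearing in the $l_1$ formula. I would then carry out a case analysis on the unordered pair of shapes of $(\Pi_0,\Pi_1)$ and on the pair of orientations (both positive, both negative, or mixed). In each case the normal form makes the main-vertex coordinates explicit and one reads off the required $v^*$; in mixed-orientation situations the substitution $h_3\mapsto-(h_1+h_2+h_3)$ from \rl{sum-of-rows}(c), which preserves $l_1(AP)$, is used to swap which main vertex plays the role of $v^*$.

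The hardest case should be the mixed-orientation one with both $\Pi_0$ and $\Pi_1$ two-dimensional, since there the main vertex from one slice maximizes $\langle h_3,\cdot\rangle$ while the one from the other minimizes $\langle h_1+h_2,\cdot\rangle$. One must then choose the integers $r,s$ in $h_3=(r,s,\pm1)$ carefully so that after the row substitution the basis remains Minkowski reduced (so that \rl{ls-geq-n} continues to apply) and so that a single main vertex satisfies all four extremal conditions at once. The degenerate subcases in which one of $\Pi_0,\Pi_1$ is a point or a primitive segment should reduce to essentially two-dimensional statements already settled in~\cite{HarSoprTier}.
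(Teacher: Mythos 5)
Your reduction to the subgoal ``$l_1(AP)\le\Vert h_3\Vert$'' is where the plan breaks: that inequality would force $\ls(P)=\Vert h_3\Vert$, but the theorem does not say that and in fact $\ls(P)>\Vert h_3\Vert$ for most empty polytopes. The paper's own case analysis already exhibits this. For instance, with the standard basis Minkowski reduced and $\Vert e_3\Vert=n$, in the mixed-orientation case with $\alpha+\beta=n$ the proof shows $\ls(P)=n+1$; in the case $\alpha+\beta\ge n+1$ it shows $\ls(P)=\min\{\alpha+\beta,\gamma+\delta\}\ge n+1$; and for $n=1$ with $P$ the unit cube we have $\Vert h_3\Vert=1$ but $\ls(P)=3$. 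Even the smallest non-simplex example, $P=\conv\{(1,0,0),(0,1,0),(0,0,1),(1,1,1)\}$, has $\Vert h_i\Vert=1$ for every Minkowski reduced basis yet $\ls(P)=2$. In all these situations, for \emph{every} unimodular $A$ one has $l_1(AP)\ge\ls(P)>\Vert h_3\Vert$, so the vertex $v^*$ that you seek — simultaneously minimizing $\langle h_1,\cdot\rangle$ and $\langle h_2,\cdot\rangle$, maximizing $\langle h_3,\cdot\rangle$, and maximizing $\langle h_1+h_2+h_3,\cdot\rangle$ — cannot exist, and the case analysis you outline will stall once you try to locate it.

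What Lemma~\ref{L:ls-geq-n} actually gives is a lower bound $\ls(P)\ge\Vert h_3\Vert$, not an upper bound to match. The paper's strategy is to first \emph{determine} the value of $\ls(P)$ in each case (it turns out to be $n$, $n+1$, $\alpha+\beta$, or $\gamma+\delta$ depending on the slice geometry), and then check that the Minkowski reduced standard basis already attains that $l_1$. The lower-bound argument is correspondingly more delicate: instead of invoking Lemma~\ref{L:ls-geq-n} alone, one takes a putative direction of small width, shows via the slice coordinates that its $z$-component must be $0$ or $\pm1$, applies Lemma~\ref{L:seven-dir} to constrain the last column of any candidate $A$, and then uses the emptiness of $P$ to rule out the remaining matrices. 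Your outline omits this entire machinery; the emptiness hypothesis appears nowhere in your plan, yet it is essential (otherwise the segment $[(0,0,0),(0,0,n)]$ would be allowed inside $T(P)$). Much of your framing — normalizing $\w_{e_1}(P)=1$, working with the slices $\Pi_0,\Pi_1$, using main vertices and orientation, and swapping rows via Lemma~\ref{L:sum-of-rows}(c) — is well aligned with the paper's setup, but the central quantitative claim needs to be replaced by a case-dependent computation of $\ls(P)$ rather than the fixed target $\Vert h_3\Vert$.
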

 
 \begin{proof}
 By applying a unimodular change of variables we can assume that the standard basis $(e_1,e_2,e_3)$ is Minkowski reduced with respect to $P$. Since $P\subset\R^3$ is an empty lattice polytope, by Howe's theorem from~\cite{Scarf} we have $\w(P)=1$ and hence  $\Vert e_1\Vert=1$, $\Vert e_2\Vert=m$, and $\Vert e_3\Vert=n$, where $1\leq m\leq n$.
 
 If $n=1$, we also  have $m=1$, and hence $P$ is contained in the unit cube. We then have three options: $P$ is lattice equivalent to the standard simplex, in which case $\ls(P)=1$;
 $P$ is the unit cube, in which case $\ls(P)=3$; and anything in between, in which case we have $\ls(P)=2$.

In what follows we assume that $n>1$.  Denote  $\Pi_0=P\cap\{x=0\}$ and $\Pi_1=P\cap\{x=1\}$. As discussed in the beginning of Section 3, each of the two bases is either a lattice
parallelogram of area 1 or is properly contained in such a parallelogram. Note that if one of  the $\Pi_i$ is a vertex, we can unimodularly map the other inside the unit square and we would have $n=1$.  Hence we can assume that each of the $\Pi_i$ is a primitive segment, a lattice triangle of  area 1/2, or a lattice parallelogram of area 1.

 \vspace{.1cm}
 {\bf\noindent Case 1.} Assume first that $\Pi_0$ and $\Pi_1$ are of opposite orientation, say, $\Pi_0$ is positively oriented while $\Pi_1$ is negatively oriented. Additionally, assume 
that neither of the  $\Pi_i$ is a translation of the unit square or of its half, so by Propositions~\ref{P:main-vertices-parallelogram} and~\ref{P:main-vertices-triangle} both $\Pi_0$  and $\Pi_1$ possess main vertices.
 
By translating $P$ if necessary, we can assume that  the main vertices  in $\Pi_0$ are $(0,0,0)$ and $(0,\alpha,\beta)$ for some $\alpha\in[0,m]$ and $\beta\in[0,n]$. Then, if necessary, apply to $P$ a matrix of the form (\ref{e:shift-layer}) to translate $\Pi_1$ in the plane $x=1$ so that its vertices are of the form $(1,\gamma,0)$ and $(1,0,\delta)$ for some $\gamma\in[0,m]$ and $\delta\in[0,n]$. 
This may only bring $\w_{e_2}(P)$ and $\w_{e_3}(P)$ down, but since the standard basis  is Minkowski reduced both widths will stay the same.

Since $\Vert e_2\Vert=m$ and $\Vert e_3\Vert=n$, and the maximum and minimum of $y$ and $z$ are attained on the main vertices, we have $\alpha=m$ or $\gamma=m$, and $\beta=n$ or $\delta=n$.

Note that we can swap the two layers $x=0$ and $x=1$, swapping $\alpha$ with $\gamma$ and $\beta$ with $\delta$ by applying
$$
\begin{bmatrix}-1&0&0\\ 0&1&0\\ \delta-\beta&0&-1\end{bmatrix}\begin{bmatrix} 0&0&1&1\\ 0&\alpha&\gamma&0\\ 0&\beta&0&\delta\end{bmatrix}+\begin{bmatrix}1\\ 0\\ \beta\end{bmatrix}=
\begin{bmatrix} 1&1&0&0\\ 0&\alpha&\gamma&0\\ \beta&0&\delta&0\end{bmatrix}
$$  
Hence we can assume that we have $\delta=n$, that is, the main vertices of $P$ are the columns of
$$
\begin{bmatrix} 0&0&1&1\\ 0&\alpha&\gamma&0\\ 0&\beta&0&n \end{bmatrix},
$$
which implies that $\gamma>0$ for otherwise  $[(1,\gamma,0),(1,0,\delta)]$ would not be primitive as $\delta=n>1$.

 Since the standard basis is reduced we have $\Vert-\gamma e_1+e_2+e_3\Vert\geq n$, which gives
 $$\max\{0,\alpha+\beta,n-\gamma\}-\min\{0,\alpha+\beta,n-\gamma\}\geq n.
 $$   
(Note that since we are minimizing and maximizing $(y+z)$ on each of the layers  here, only the main vertices matter.) Since $\gamma>0$, this condition implies $\alpha+\beta\geq n$.  Hence $\alpha>0$ for otherwise $\beta=n$ and $[(0,0,0),(0,\alpha,\beta)]$ is not primitive.
 
 \vspace{.1cm}
 {\bf\noindent Case 1a.} Suppose first that $\alpha+\beta=n$. Then $l_1(P)=n+1$. Let's show that in this case we have $\ls(P)=n+1$. For this, according to part (5) of Lemma~\ref{L:applyA}
 we need to look for integer directions $(a,b,c)$ with $c\geq 0$ such that the lattice width of $P$ in such directions is at most $n$. 
We have
$$\max\{0,b\alpha+c\beta, a+b\gamma,a+cn\}-\min \{0,b\alpha+c\beta, a+b\gamma,a+cn\}\leq \w_{(a,b,c)}(P)\leq n.
$$    
Hence $cn-b\gamma\leq n$. Also, plugging in $\beta=n-\alpha$ we get $b\alpha+c(n-\alpha)\leq n$.
   If $c>1$ we get $n-b\gamma<cn-b\gamma\leq n$, so $b\gamma>0$ and hence $b>0$ since we have $\gamma>0$. Also, if $\alpha=n$ then $\beta=0$ and hence $[(0,0,0),(0,\alpha,\beta)]$ is not primitive.
It follows that $\alpha<n$, so we get
 $$b\alpha+(n-\alpha)<b\alpha+c(n-\alpha)\leq n,
 $$
which  implies $b\alpha<\alpha$. Using $\alpha>0$  we conclude that $b<1$,  which contradicts $b>0$ obtained above. We have shown that $c=0$ or 1. This implies that if $T(P)\subset n\Delta $ then the last coordinate of each of the rows of the corresponding unimodular matrix $A$ has 
 last coordinate equal to 0, 1, or $-1$. By Lemma~\ref{L:applyA}, the same applies to the sum of any two rows of $A$, as well as to the sum of all three rows. Hence up to permuting the rows, we have two options for the last column of $A$:
 $$\pm\begin{bmatrix} 0\\ 0\\1\end{bmatrix}\  {\rm and}\ \ \pm\begin{bmatrix} 0\\ -1\\1\end{bmatrix},
 $$  
where by Lemma~\ref{L:sum-of-rows} we only need to consider the first option. Denote the rows of $A$ by $h_1, h_2,$ and $h_3$. Since the standard basis is reduced and $h_3$ is of the form
$ae_1+be_2\pm e_3$ we have 
$$\w_{h_3}(P)=\Vert h_3\Vert \geq n.$$ 
Together with $T(P)\subset n\Delta$, this implies that $(0,0,n)\in  T(P)$. We also have $\Vert h_1+h_2+h_3\Vert \geq n$ and hence $(0,0,0)\in T(P)$.
Hence the entire segment $[(0,0,0), (0,0,n)]$ is contained in $T(P)$. Since $P$ is empty and $n>1$ we conclude that there is no such map $T$ and hence $\ls(P)=n+1$.

 \vspace{.1cm}
 {\bf\noindent Case 1b.} We can now assume that $\alpha+\beta\geq n+1$, which implies $\beta>0$. From above, we also have $\delta=n, \alpha>0, \gamma>0$. We then have $l_1(P)=\alpha+\beta$.  Also, $\gamma+\delta=n+\gamma\geq n+1$.  If we swap the layers $x=0$ and $x=1$, as explained above, we can unimodularly inscribe $P$ inside  $(\gamma+\delta)\Delta$. We now show that $\ls(P)=\min\{\alpha+\beta,\gamma+\delta\}$. Let $(a,b,c)$ be an integer direction with $c\geq 0$ such that  $\Vert (a,b,c)\Vert<\min\{\alpha+\beta,\gamma+\delta\}$. 
Then
$$\max\{0,b\alpha+c\beta, a+b\gamma,a+cn\}-\min \{0,b\alpha+c\beta, a+b\gamma,a+cn\}<\min\{\alpha+\beta,\gamma+\delta\}.
$$  
If $c>1$ we get $b\alpha +\beta<b\alpha+c\beta<\alpha+\beta$, so $b\alpha<\alpha$, and since $\alpha>0$ we conclude $b\leq 0$. Using this, we get
$2n\leq cn\leq cn-b\gamma<n+\gamma$, which implies $n<\gamma$ and this contradiction  proves $c\leq 1$.
If $c=1$, we have $b\alpha+\beta<\alpha+\beta$ and hence $b<1$. Also, $n-b\gamma<n+\gamma$, so $b>-1$, and we conclude that $b=0$. We have checked that $c=0$ or 1 and that in the case when $c=1$ we also have $b=0$.

As before, if we have $l_1(T(P))<\min\{\alpha+\beta,\gamma+\delta\}$ we can assume that the last column of the corresponding unimodular matrix $A$ is the transpose of $\begin{bmatrix}0&0&\pm 1\end{bmatrix}$.
Let the rows of $A$ be $h_1, h_2$, and $h_3$. Then the last coordinate of $h_3, h_1+h_3$, and $h_2+h_3$ is 1 so by what we just showed and by Lemma~\ref{L:applyA} the second component of each of these vectors is 0, so the entire second column of a unimodular matrix $A$ would consist of all zeroes. This contradiction proves that in this case $\ls(P)=\min\{\alpha+\beta,\gamma+\delta\}$.

 \vspace{.1cm}
 {\bf\noindent Case 2.} We next consider the situation where at least one of the $\Pi_i$ does not possess main vertices, that is, $\Pi_i$, up to a lattice shift, is either  the unit square or a triangle which is  half of the unit square.
If this happens for both $\Pi_0$ and $\Pi_1$, we have $n=1$.
Let us assume $\Pi_1$ is either the unit square $$\conv\{(1,0,0), (1,0,1), (1,1,0), (1,1,1)\}$$ or a triangle obtained by dropping one of its vertices, and that the main vertices in $\Pi_0$ are $(0,\alpha,0)$ and $(0,0,\beta)$.
Then since $n>1$ we have $\beta=n$. If $m\geq 1$ we also have $\alpha=m$.  If $m=1$, we cannot have $\alpha=0$ since then $[(0,\alpha,0), (0,0,\beta)]$ would not be primitive, so we again have $\alpha=m$.

Assume first that $\Pi_1$ is the unit square. We then have 
$$\max\{n,m,n-1, n+1\}-\min\{n,m,n-1,n+1\}\geq \Vert (n-1)e_1+e_2+e_3\Vert\geq  n.$$ 
If $n>m$ we get $n+1-m\geq n$, which implies $m=1$. If $n\geq 3$ we get $l_1(P)=n$ and hence by Lemma~\ref{L:ls-geq-n} we conclude that $\ls(P)=n$. If $n=2$,  we have $l_1(P)=3$. 
Let  $(a,b,c)$ be an integer direction  such that the lattice width of $P$ in this direction is at most 2. As when showing above that $\ls(P)=n+1$ under the assumption that $\alpha+\beta=n$, here one can easily check that $|c|\leq 1$ and there is no unimodular map $T$ with $l_1(T(P))=n$. If $\Pi_1$ is a triangle which is  half of the unit square, the computation is similar.

 \vspace{.1cm}
 {\bf\noindent Case 3.} It remains to consider the case when $\Pi_0$ and $\Pi_1$ are of the same orientation. We can then assume that $\Pi_0$ and $\Pi_1$ are both oriented positively. Shift $\Pi_0$ so that its main vertices are
  $(0,0,0)$ and $(0,\alpha,\beta)$, where $0\leq \alpha\leq m$ and $0\leq  \beta\leq n$.  Shift $\Pi_1$ so that its main vertices are at
 $(1,\gamma,\delta)$ and $(1,m,n)$, where $0\leq \gamma\leq m$ and $0\leq  \delta\leq n$.
   
We have $\alpha=m$ or $\gamma=0$ since otherwise we can shift $\Pi_1$ in $x=1$ and decrease $\Vert e_2\Vert$.
Similarly,  we have $\beta=n$ or  $\delta=0$. Switching the layers $x=0$ and $x=1$, if necessary, we can assume that $\beta=n$.

Since the standard basis is reduced, we have $\Vert e_3-e_2\Vert \geq n$, and this gives
$$\max\{0, n-\alpha,\delta-\gamma\}-\min\{0, n-\alpha,\delta-\gamma\}\geq n.$$
Since $n-\alpha\geq 0$ this implies 
\begin{equation}
\max\{n-\alpha,\delta-\gamma\}-\min\{0,\delta-\gamma\}\geq n.\label{e:eqn}
\end{equation}   
If this translates to $n-\alpha\geq n$, then $\alpha=0$, which implies that $n=\beta=1$. 

If (\ref{e:eqn})gives $\delta-\gamma\geq n$ then $\delta=n$ and $\gamma=0$, so $m=1$ and $\Pi_1=[(1,0,n), (1,1,n)]$ is negatively oriented, which we have covered before.

Finally, if (\ref{e:eqn}) is equivalent to $n-\alpha-\delta+\gamma\geq n$ we get $\alpha+\delta\leq \gamma$. Since $\alpha=m$ or $\gamma=0$, we have $\gamma\leq \alpha$ and hence
$\gamma+\delta\leq \alpha+\delta\leq \gamma$, so $\delta=0$ and $\alpha=\gamma$. If $\alpha=\gamma=m$, we get $[(1,m,0), (1,m,n)]\subset\Pi_1$ and hence $n=1$.
If $\alpha=\gamma=0$ then $[(0,0,0), (0,0,n)]\subset\Pi_0$, so $n=1$.

 \end{proof}

An algorithm for finding a basis which is reduced with respect to given $P\subset\R^3$ is provided in Algorithm 3.14 of~\cite{HarSopr} and its complexity is analyzed in Theorem 3.17 in the same paper. This algorithm is a modified version of Algorithm 3.11 of~\cite{HarSopr} where some termination conditions were added to make the complexity analysis possible.

Based on the proof of Theorem~\ref{T:empty-polytopes}, we can compute the lattice size of an empty lattice polytope $P\subset\R^3$ using the following algorithm.

 \begin{Algorithm}\label{A:compute-ls} Let $P\subset\R^3$ be an empty lattice polytope. Use Algorithm 3.11 or 3.14 from~\cite{HarSopr} to find a basis of $\Z^3$ which is reduced with respect to $P$. (That is, to $K=(P+(-P))^\circ$ in the more formal language used in~\cite{HarSopr}).  Next, find the corresponding Minkowski reduced basis $(h_1,h_2,h_3)$, as explained in the paragraph following Example~\ref{E:reduced-not-Mink-reduced}
 and pass to $Q=AP$ where $A$ is a unimodular matrix with rows $h_1$, $h_2$, and $h_3$. Consequently, by Lemma~\ref{L:applyA-reduced}, the standard basis is Minkowski reduced with respect to $Q$.  For each of the eight matrices 
 $$
 \Sigma=\begin{bmatrix}\pm1&0&0\\  0&\pm1&0\\ 0&0&\pm 1\end{bmatrix}
 $$
 shift the bases $\Pi_0$ and $\Pi_1$ of $\Sigma Q$ in the layers $x=0$ and $x=1$ so that each of them is in the corresponding positive $(y,z)$-octant and touches both coordinate axes, and call the obtained polytope $R$. Then $\ls(P)$ is the minimum of $l_1(R)$ over the eight matrices $\Sigma$.
 \end{Algorithm}

 \section{Another class of 3D polytopes of lattice width one}\label{S:another-class}

 In this section we describe another class of lattice polytopes $P\subset\R^3$ of lattice width one  for which there exists a reduced basis that computes its lattice size $\ls(P)$.  
 
In Proposition~\ref{P:det-not-one}, we are working with the plane case, so now $\Delta\subset\R^2$ is the standard 2-simplex. We show that if one replaces unimodular matrices in the definition of 
$\ls(P)$ with nonsingular integer matrices, then this will define the same object.

 \begin{Proposition}\label{P:det-not-one} Let $P\subset\R^2$ be a lattice polygon. Then for any integer nonsingular matrix $A$ of size 2 we have $l_1(AP)\geq \ls(P)$.
 \begin{proof}
Let $A=UDV$ be the Smith normal form for $A$. That is, $U$ and $V$ are unimodular matrices and $D$ is an integer diagonal matrix with positive entries on the diagonal.
Then, since $U$ and $V$ are unimodular, we have $\ls(UDVP)=\ls(DVP)$ and $\ls(VP)=\ls(P)$. Since $D$ is diagonal with positive integer entries on the diagonal, assuming that $VP$ contains the origin, 
we have $VP\subset DVP$ and hence $\ls(VP)\leq\ls(DVP)$.  Hence
$$l_1(AP)\geq \ls(AP) =\ls(UDVP)=\ls(DVP)\geq \ls(VP)=\ls(P).
$$
 \end{proof}
 \end{Proposition}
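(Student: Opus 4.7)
The plan is to reduce to the case of a diagonal matrix via the Smith normal form. Write $A=UDV$, where $U,V\in{\rm GL}(2,\Z)$ are unimodular and $D=\operatorname{diag}(d_1,d_2)$ with positive integer entries (signs can be absorbed into $U$). Since $\ls$ is by definition invariant under unimodular changes of coordinates, and $l_1(AP)\geq \ls(AP)$ simply by taking $T$ to be the identity in the definition of $\ls(AP)$, we get
\[
l_1(AP)\ \geq\ \ls(AP)\ =\ \ls(UDVP)\ =\ \ls(DVP),
\]
and similarly $\ls(VP)=\ls(P)$. So the problem boils down to showing $\ls(DVP)\geq \ls(VP)$.

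For this, I would translate $VP$ by a lattice vector so that the origin lies inside it (this translation does not affect $\ls$). For any $x\in VP$, the point $D^{-1}x=(x_1/d_1,x_2/d_2)$ lies on the segment from $0$ to $x$ because each factor $1/d_i$ belongs to $(0,1]$; by convexity $D^{-1}x\in VP$, so $x\in D(VP)$. Therefore $VP\subset D(VP)$. Any unimodular $T$ that sends $D(VP)$ into $\ell\Delta$ automatically sends the smaller set $VP$ into $\ell\Delta$ as well, giving $\ls(VP)\leq \ls(D(VP))$. Chaining the inequalities produces the desired $l_1(AP)\geq \ls(P)$.

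The main obstacle, such as it is, is the containment $VP\subset D(VP)$: it is not literally true without a choice of translation, so one has to remember that lattice translations are unimodular and use them to place the origin inside $VP$ before invoking convexity. Once that observation is made, the rest is just bookkeeping of the Smith normal form together with the unimodular-invariance of $\ls$. Note also that the argument is dimension-agnostic — it works verbatim in $\R^d$ — so the restriction to lattice polygons in $\R^2$ in the statement is not essential to the proof.
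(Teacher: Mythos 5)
Your proof is correct and follows the same route as the paper: Smith normal form $A=UDV$, unimodular invariance of $\ls$, translation so the origin lies in $VP$, and the containment $VP\subset D(VP)$ giving $\ls(VP)\leq\ls(DVP)$. Your remarks on why the translation is needed and on the argument working in any dimension are sound but do not change the substance.
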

 
 \begin{Proposition}\label{P:lstwo}
 Let $P\subset\R^2$ be a polygon in the $(x,y)$-plane of $\R^3$. Let  $\lstwo(P)=l$, where $\Delta_2$ is the standard simplex in the $(x,y)$-plane.
 Then $\ls(P)=l$, where  $P$ is now considered as a subset of $\R^3$, and $\Delta$ is the standard simplex in $\R^3$. 
 \end{Proposition}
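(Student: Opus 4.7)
My plan is to prove $\ls(P)\leq l$ and $\ls(P)\geq l$ separately. The upper bound is essentially immediate: if $T_2\in{\rm AGL}(2,\Z)$ realizes $\lstwo(P)=l$, then $T_3(x,y,z):=(T_2(x,y),z)$ lies in ${\rm AGL}(3,\Z)$ and sends $P$ (which sits in $z=0$) into $l\Delta_2\times\{0\}$. Since $\Delta_2\subset\Delta$ when the 2-simplex is embedded in $\R^3$ at $z=0$, we obtain $T_3(P)\subset l\Delta$, hence $\ls(P)\leq l$.

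For the reverse inequality I would use Proposition~\ref{P:det-not-one}. Pick $T\in{\rm AGL}(3,\Z)$ realizing $\ls(P)=l_1(T(P))$, write $T(p)=Ap+v$ with $A\in{\rm GL}(3,\Z)$, and let $b_i=(a_{i1},a_{i2})\in\Z^2$ for $i=1,2,3$. Since every point of $P$ has third coordinate zero, the translation $v$ cancels and Lemma~\ref{L:sum-of-rows}(a) gives
\[
\ls(P)=\max_{(x,y)\in P}\lan b_1+b_2+b_3,(x,y)\ran-\sum_{i=1}^{3}\min_{(x,y)\in P}\lan b_i,(x,y)\ran.
\]
Because $A$ is unimodular, its first two columns have rank $2$, so $b_1,b_2,b_3$ span $\R^2$ and at least two of them are linearly independent. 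Using Lemma~\ref{L:sum-of-rows}(b) to permute the rows of $A$, I may assume $b_1,b_2$ are linearly independent, and let $M$ be the nonsingular $2\times 2$ integer matrix with rows $b_1,b_2$. Comparing with the two-dimensional expression
\[
l_1(MP)=\max_{(x,y)\in P}\lan b_1+b_2,(x,y)\ran-\min_{(x,y)\in P}\lan b_1,(x,y)\ran-\min_{(x,y)\in P}\lan b_2,(x,y)\ran,
\]
the difference $l_1(T(P))-l_1(MP)$ equals $\max\lan b_1+b_2+b_3,(x,y)\ran+\max\lan-b_3,(x,y)\ran-\max\lan b_1+b_2,(x,y)\ran$, which is nonnegative because $(b_1+b_2+b_3)+(-b_3)=b_1+b_2$ and the sum of maxima of two linear functionals dominates the maximum of their sum. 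Then Proposition~\ref{P:det-not-one} gives $l_1(MP)\geq\lstwo(P)=l$, so $\ls(P)\geq l$.

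The only genuinely nontrivial step is the rank observation forcing two of the $b_i$ to be linearly independent; the rest reduces to a short sum-of-maxima inequality combined with Proposition~\ref{P:det-not-one}, which is purpose-built to absorb the possible nonunimodularity of $M$.
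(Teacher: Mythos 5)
Your argument is correct and takes essentially the same route as the paper: both isolate a nonsingular $2\times 2$ block from the first two columns of the unimodular matrix $A$ (using that some such block must be nonsingular, and permuting rows if needed) and then invoke Proposition~\ref{P:det-not-one} to conclude. The only difference is in how the comparison $l_1(MP)\leq l_1(AP)$ is established: the paper projects the containment $AP+v\subset l'\Delta$ onto the $(x,y)$-plane and uses $\pi(l'\Delta)=l'\Delta_2$, while you derive the same inequality algebraically from Lemma~\ref{L:sum-of-rows}(a) together with subadditivity of the maximum; you also make the easy upper bound $\ls(P)\leq l$ explicit, which the paper leaves implicit.
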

 
 \begin{proof} Suppose that for 
 $$A=\begin{bmatrix}a_{11}&a_{12}&a_{13}\\ a_{21}&a_{22}&a_{23}\\ a_{31}&a_{32}&a_{33}\end{bmatrix}\in{\rm{GL}(3,\Z)}$$ we have $l_1(AP)=l'<l$ and hence $AP+h\subset\l'\Delta$ for some $h=[b_1,b_2,b_3]^T\in\Z^3$. 

 Since $A$ is unimodular, at least one of the matrices $\begin{bmatrix}a_{11}&a_{12}\\ a_{21}&a_{22}\end{bmatrix}$, $\begin{bmatrix}a_{11}&a_{12}\\ a_{31}&a_{32}\end{bmatrix}$, and $\begin{bmatrix}a_{21}&a_{22}\\ a_{31}&a_{32}\end{bmatrix}$ is nonsingular. Hence we can assume that the determinant of $B:=\begin{bmatrix}a_{11}&a_{12}\\ a_{21}&a_{22}\end{bmatrix}$ is nonzero. Let $\pi:\R^3\to\R^2$ be the projection defined by $(x,y,z)\mapsto(x,y)$. 
 We have $\pi(AP+h)\subset\pi(l'\Delta)$ and hence $BP+[b_1,b_2]^T\subset l'\Delta_2$, where $\Delta_2$ is the standard simplex in the $(x,y)$-plane. Since $\lstwo(P)=l$ and $l'<l$, this contradicts the result of Proposition~\ref{P:det-not-one}. 
 \end{proof}
 
 \begin{Theorem}\label{T:another-class} Let $P\subset\R^3$ be a convex lattice polytope of lattice width one enclosed between the planes $x=0$ and $x=1$.  Denote the bases of $P$ by
 $$P_0=P\cap \{x=0\}\ \ {\rm and}\ \  P_1=P\cap\{x=1\}.$$ Consider $P_0$ and $P_1$ as subsets of the $(y,z)$-plane by ignoring the $x$-coordinate  and suppose that up to a lattice translation $P_1$ is contained in the convex hull of the interior lattice points of $P_0$.  Then $\ls(P)=\lstwo(P_0)$ and there exists a reduced basis that computes $\ls(P)$.
 \end{Theorem}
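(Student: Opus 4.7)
\smallskip\noindent\textbf{Proof plan.} The plan is to extend a 2D reduced basis of $\Z^2$ with respect to $B_0$ (regarded as a polygon in the $(y,z)$-plane) to a 3D basis of $\Z^3$ by adjoining $e_1$, and to show that this extended basis is reduced with respect to $P$ and realizes $\lstwo(B_0)$ as $\ls(P)$. Pre-composing with the shear $(x,y,z)\mapsto(x,y+ax,z+bx)$ (which is unimodular) we may assume the lattice translation of the hypothesis is trivial, so $B_1\subseteq\conv(\text{interior lattice points of }B_0)$ literally. The lower bound is then immediate: $B_0\subseteq P$ gives $\ls(B_0)\leq\ls(P)$, and \rp{lstwo} (applied after a coordinate permutation bringing the $x=0$ plane to an $(x,y)$-plane copy) gives $\ls(B_0)=\lstwo(B_0)$, hence $\ls(P)\geq\lstwo(B_0)$.

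\smallskip\noindent\emph{Upper bound via the candidate basis.} Let $(f_1,f_2)$ be a basis of $\Z^2$ that is reduced with respect to $B_0$. By the 2D result of \cite{HarSoprTier}, after absorbing a possible sign change into the $f_i$ (which preserves 2D reducedness), the matrix $A_2\in{\rm GL}(2,\Z)$ with rows $f_1,f_2$ satisfies $A_2(B_0)+v_2\subseteq l\Delta_2$ for some $v_2\in\Z^2$, where $l:=\lstwo(B_0)$. Form the $3\times 3$ block matrix
$$A=\begin{bmatrix}1&0\\0&A_2\end{bmatrix}\in{\rm GL}(3,\Z),$$
whose rows are $h^1=e_1$, $h^2=(0,f_1)$, $h^3=(0,f_2)$. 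The $x=0$ slice of $A(P)+(0,v_2)$ is $A_2(B_0)+v_2\subseteq l\Delta_2$, which coincides with the $x=0$ slice of $l\Delta$. Because $B_1\subseteq\conv(\text{int.\ lat.\ pts.\ of }B_0)$ and $A_2$ is unimodular, $A_2(B_1)+v_2$ lies in the convex hull of the interior lattice points of $l\Delta_2$, namely $\{(y,z):y\geq 1,\ z\geq 1,\ y+z\leq l-1\}\subseteq(l-1)\Delta_2$, the $x=1$ slice of $l\Delta$. Since $P=\conv(B_0\cup B_1)$ and $l\Delta$ is convex, this forces $A(P)+(0,v_2)\subseteq l\Delta$, so $\nls(AP)\leq l$ and $\ls(P)\leq l$.

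\smallskip\noindent\emph{Reducedness and conclusion.} To verify that $(h^1,h^2,h^3)$ is reduced with respect to $P$ in the sense of \rd{Mink-reduced}, the key ingredient is
$$\w_{(m,g)}(P)\ \geq\ \w_{(m,g)}(B_0)=\w_g(B_0)\qquad(m\in\Z,\ g\in\Z^2),$$
where the first step uses $B_0\subseteq P$ and the equality uses $B_0\subseteq\{x=0\}$. Combined with the identity $\pi(P)=B_0$ (since $B_1\subseteq B_0$, where $\pi$ projects onto the $(y,z)$-plane), this yields $\Vert h^2\Vert=\w_{f_1}(B_0)$, $\Vert h^3\Vert=\w_{f_2}(B_0)$, and $\Vert h^1\Vert=\w(P)=1$, so condition (1) of \rd{Mink-reduced} reduces to the 2D reducedness of $(f_1,f_2)$. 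Conditions (2) and (3) read $\w_{(1,\pm f_1)}(P)\geq\w_{f_1}(B_0)$ and $\w_{(m,nf_1+f_2)}(P)\geq\w_{f_2}(B_0)$, both of which follow from the displayed inequality combined with the equivalent form $\w_{nf_1+f_2}(B_0)\geq\w_{f_2}(B_0)$ of 2D reducedness recalled in the introduction. Combining the two bounds, $l\leq\ls(P)\leq\nls(AP)\leq l$, so the reduced basis $(h^1,h^2,h^3)$ computes $\ls(P)=\lstwo(B_0)$. The substantive obstacle is the layer-shrinkage step that forces $A_2(B_1)+v_2$ into the $x=1$ slice of $l\Delta$; without the interior lattice-hull hypothesis this could fail, which is exactly the phenomenon exploited by the counterexamples in Section~5.
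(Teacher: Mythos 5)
Your proof is correct and follows essentially the same route as the paper: both arguments use Proposition~\ref{P:lstwo} for the lower bound $\ls(P)\geq\lstwo(B_0)$, use the interior lattice-point containment to squeeze the $x=1$ layer into the top slice of $l\Delta$ for the upper bound, and verify the three reducedness conditions of Definition~\ref{D:Mink-reduced} via the inequality $\w_{(m,g)}(P)\geq\w_g(B_0)$ combined with the 2D reducedness of a basis for $B_0$. The only cosmetic difference is that you build the block matrix $A$ explicitly on top of a reduced 2D basis $(f_1,f_2)$, whereas the paper instead first applies a unimodular change of coordinates so that $(e_2,e_3)$ is already reduced with respect to $B_0$ and then checks $(e_1,e_2,e_3)$ is reduced.
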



 \begin{proof}
 Use a unimodular transformation to shift $P_1$ in the plane $x=1$ so that its projection onto the plane $x=0$ along the $x$-axis is contained in the convex hull of the  interior lattice points of $P_0$. Let $A=\begin{bmatrix}a_{11}&a_{12}\\a_{21}&a_{22}\end{bmatrix}\in{\rm GL(2,\Z)}$ be a matrix whose rows form a basis of $\Z^2$ that is reduced with respect to $P_0$, that is, the standard basis of $\Z^2$ is reduced with respect to $AP_0$. Applying to $P$ the map defined by the matrix
$$\begin{bmatrix}1&0&0\\0&a_{11}&a_{12}\\0&a_{21}&a_{22}\end{bmatrix},
$$
 we can assume that $(e_2,e_3)$ is reduced with respect to $P_0$. Note that the projection of $P_1$ onto $x=0$ is still contained in the convex hull of the  interior lattice points of $P_0$.

 Since $\Vert e_1\Vert=1$, we have $\Vert e_1\Vert\leq \Vert e_2\Vert$ and $\Vert e_1\Vert\leq \Vert e_3\Vert$. Since $P_0$ contains the projection of $P_1$ 
 we have $\w_{e_2}(P)=\w_{e_2}(P_0)$ and $\w_{e_3}(P)=\w_{e_3}(P_0)$. Since  $(e_2,e_3)$ is reduced with respect to $P_0$, we have $\w_{e_3}(P_0)\geq \w_{e_2}(P_0)$ and 
 $\w_{ne_2+e_3}(P_0)\geq \w_{e_3}(P_0)$ for $n\in\Z$. Since $P_0$ is contained in the plane $x=0$ we have $\w_{me_1+ne_2+e_3}(P_0)=\w_{ne_2+e_3}(P_0)$ and 
$\w_{e_1\pm e_2}(P_0)=\w_{e_2}(P_0)$.
 
Hence
$$\Vert e_3\Vert=\w_{e_3}(P)=\w_{e_3}(P_0)\geq \w_{e_2}(P_0)=\w_{e_2}(P) =\Vert e_2\Vert.
$$   
Next, since $P_0\subset P$ we have
$$\Vert e_1\pm e_2\Vert=\w_{e_1\pm e_2}(P)\geq \w_{e_1\pm e_2}(P_0)=\w_{e_2}(P_0)=\w_{e_2}(P)=\Vert e_2\Vert.
$$
Finally, for $m,n\in\Z$,  we have
\begin{align*}
\Vert me_1+ne_2+e_3\Vert&=\w_{me_1+ne_2+e_3}(P)\geq \w_{me_1+ne_2+e_3}(P_0)\\
&=\w_{ne_2+e_3}(P_0)\geq \w_{e_3}(P_0)=\w_{e_3}(P)=\Vert e_3\Vert.
\end{align*}
We have checked that the standard basis is reduced with respect to the obtained $P$.

Denote $l=\lstwo(P_0)$.   By Proposition~\ref{P:lstwo} we have 
\begin{equation}\label{e:lsleql}
\ls(P)\geq \ls(P_0)=\lstwo(P_0)=l.
\end{equation}
Since basis $(e_2,e_3)$ is reduced with respect to $P_0$, we have $\lstwo(P_0)={\rm nls}_{\Delta_2}(P_0)$. (This is the main result of~\cite{HarSoprTier} that we explained in Section 2.)  Hence shifting and reflecting $P$ in the $(e_1,e_2)$ and $(e_1,e_3)$ planes, if necessary,  we can assume that $P_0\subset l\Delta$. 
 Since the projection of $P_1$ is contained in the convex hull of the interior lattice points of $P_0$, and $P_1$ is at the level $x=1$, we also have  $P_0\subset l\Delta$. It follows that $P\subset l\Delta$ and hence $\ls(P)\leq l$. Together with (\ref{e:lsleql}) this implies $\ls(P)=l$.  
 \end{proof}
 
We have shown that to compute the lattice size of a width one lattice polytope $P$ described in Theorem~\ref{T:another-class}  we need to compute the lattice size of $P_0\subset\R^2$. Let $(h_2,h_3)$ be a basis of $\Z^2$ reduced with respect to $P_0$  and  let $A$ be a matrix with rows $h_2$ and $h_3$. Then combining Theorem~\ref{T:another-class} with Theorem 2.7 from~\cite{HarSoprTier} we get $\ls(P)=\lstwo(P_0)={\rm nls}_{\Delta_2}(AP_0).$

\section{Counterexample}\label{S:experimentation}
In light of our results proved in Theorems~\ref{T:empty-polytopes} and~\ref{T:another-class} it is natural to ask whether it is true that for any lattice polytope $P\subset\R^3$ of lattice width one there exists a reduced basis that computes its lattice size $\ls(P)$. We next show that the answer to this question is negative.

%

\begin{Example}\label{E:counterexample} 

Let $P=\conv\{(0,2,5), (0,-2,5), (0,1,-6), (1,-8,5), (1,2,-5), (1,-4,-3)\}$.
Our first goal is to show that the standard basis $(e_1,e_2,e_3)$ is Minkowski reduced with respect to $P$. We have $\Vert e_1\Vert=1$, $\Vert e_2\Vert=10$ and $\Vert e_3\Vert=11$.
Hence it is enough to consider all $\Z^3$ bases $(h_1,h_2,h_3)$ with $\Vert h_i\Vert\leq 11$ and check that $\Vert h_1\Vert+\Vert h_2\Vert+\Vert h_3\Vert\geq 22$.
Shift $P$ so that its barycenter is at  the origin. Consider a ball $B\subset P$ centered at the origin and let its radius be $R$. Denote by $\Vert h\Vert_2$ the Euclidean norm of $h$.
Then if $\Vert h\Vert_2\geq \frac{11}{R}$ we have
$$\Vert h\Vert=\w_h(P)\geq \w_h(B)=R\cdot\Vert h\Vert_2\geq 11.
$$
This means that we only need to work with $h_i$ that satisfy $\Vert h_i\Vert\leq  \frac{11}{R}$. In  {\it Counterexample.mgm} in \cite{GitHub} we create a list of all primitive vectors $h\in \Z^3$ with the absolute value of components bounded from above by $\lfloor \frac{11}{R}\rfloor$ that also satisfy $\Vert h\Vert\leq 11$. We then use these vectors to build all possible matrices $A$ with rows $h_1,h_2,h_3$ from this list. If matrix $A$ is unimodular we check whether $\Vert h_1\Vert+\Vert h_2\Vert+\Vert h_3\Vert\geq 22$. This turns out to always be the case which allows us to conclude that the standard basis is indeed Minkowski reduced with respect to $P$.

For any basis $(h_1,h_2, h_3)$ which is reduced with respect to $P$ we then have $\Vert h_3\Vert=\mu_3(P)=11$, while
$\Vert h_1\Vert\leq 11$, and  $\Vert h_2\Vert\leq 11$. (See the discussion after  Definition~\ref{D:Minkowski-reduced}.)
We next show that the minimum of $l_1(AP)$ over all matrices $A$ whose rows form a reduced basis of $\Z^3$ is bounded below by 14.
For this we use the same list as above to build unimodular matrices $A$ with rows $h_i$ from the list and  check that we always have $l_1(AP)\geq 14$.


However,  if we apply to $P$ a unimodular map defined by 
$$A=\begin{bmatrix}
1&0&0\\2&-1&0\\7&2&1\end{bmatrix}$$
we get $l_1(AP)=13$. (Note that rows $h_1$, $h_2$, $h_3$ of this matrix satisfy $\Vert h_1\Vert=1, \Vert h_2\Vert=12, \Vert h_3\Vert=13$ and therefore basis $(h_1,h_2,h_3)$ is not reduced.) We have checked  that for this $P$ there is no reduced basis that computes its lattice size.
\end{Example}

We conclude that for $P\subset\R^3$ with $\w(P)=1$ there does not need to exist a reduced basis that computes its lattice size $\ls(P)$. 


%

\end{document}